\documentclass[a4paper,12pt]{article}

% \date{}

\usepackage{amsmath, wrapfig}
\usepackage{dsfont, a4wide, amsthm, amssymb, amsfonts, graphicx}
\usepackage{fancyhdr, xspace, psfrag, setspace, supertabular, color, enumerate}
\usepackage{hyperref}

\setcounter{MaxMatrixCols}{20}

\newtheorem{theorem}{Theorem}[section]
\newtheorem{proposition}[theorem]{Proposition}

\newtheorem{corollary}[theorem]{Corollary}
\newtheorem{conjecture}[theorem]{Conjecture}
\newtheorem{definition}[theorem]{Definition}

\newtheorem{example}[theorem]{Example}
\newtheorem{question}[theorem]{Question}
\newtheorem{remark}[theorem]{Remark}
\newtheorem*{example*}{Example}

\def\b{\beta}

\def\e{\epsilon}

\def\ct{\mathcal{T}}
\def\Z{\mathbb{Z}}
\def\F{\mathbb{F}}

\def\E{\mathbb{E}}
\def\P{\mathbb{P}}

\def\N{\mathbb{N}}

\title{On small densities defined without pseudorandomness}
\author{Thomas Karam\footnote{Mathematical Institute, University of Oxford. Email: \texttt{thomas.karam@maths.ox.ac.uk}.}}
%\date{}                                           % Activate to display a given date or no date

\begin{document}
\maketitle
%\section{}
%\subsection{}

\begin{abstract}

We identify an assumption on linear forms $\phi_1, \dots, \phi_k: \F_p^n \to \F_p$ that is much weaker than approximate joint equidistribution on the Boolean cube $\{0,1\}^n$ and is in a sense almost as weak as linear independence, but which guarantees that every subset of $\{0,1\}^n$ on which none of $\phi_1, \dots, \phi_k$ has full image has a density which tends to 0 with $k$. This density is at most quasipolynomially small in $k$, a bound that is necessarily close to sharp.

\end{abstract}

\tableofcontents

\section{Introduction}

\subsection{Background on mod-$p$ linear forms and basic difficulties}

All our statements in this paper will be uniform in the integer $n$, which will not play any significant role. 

The present paper will involve restrictions of mod-$p$ forms (that is, of linear forms $\F_p^n \to \F_p$) to subsets of $\F_p^n$ such as $\{0,1\}^n$ and more generally $S^n$ for some non-empty subset $S$ of $\F_p$. These restrictions have been studied in several recent works, such as \cite{Gowers and K equidistribution}, \cite{Gowers and K approximation}, \cite{K. ranges} (by Gowers and the author, for the first two) in the more general settings of polynomials and of abelian group homomorphisms, as well as in \cite{K. correlation}, which focused on the question of how many dense subsets of the cube $\{0,1\}^n$ could be built so that any two of them can be strongly distinguished by some mod-$p$ form, in various senses. 

Another context in which these restrictions of mod-$p$ forms appear is that of obstructions to combinatorial statements of Hales-Jewett type, such as those discussed in \cite{Gowers}. We refer the reader to the introductions of \cite{Gowers and K equidistribution}, \cite{Gowers and K approximation}, \cite{K. correlation} for a discussion of that connection.

Restrictions of $\F_p^n$ to $\{0,1,2\}^n$ also arise in the recent work of Bhangale, Khot and Minzer \cite{Bhangale Khot Minzer} where they considerably improve the best known bounds on the size of subsets of $\F_p^n$ which do not contain any length three arithmetic progression $\{x,x+a,x+2a\}$ where the common difference $a$ is furthermore required to be an element of $\{0,1,2\}^n \setminus \{0\}$. As mentioned in their paper, the slightly stronger variant where $\{0,1,2\}$ is replaced by $\{0,1\}$ had been asked by Hazla, Holenstein and Mossel \cite{Hazla Holenstein Mossel} and emphasized by Green \cite{Green}.

In this paper we continue the development of the basic theory of sets defined by restricted mod-$p$ forms, in a new direction that focuses on a perhaps surprisingly weak condition - in particular, much weaker than the condition used in previous works \cite{Gowers and K approximation}, \cite{K. correlation} - which nonetheless suffices to guarantee that a subset of $\{0,1\}^n$ defined by a large conjunction of non-trivial conditions involving mod-$p$ forms is not dense.

The distributions of mod-$p$ forms and $k$-tuples of mod-$p$ forms on $\{0,1\}^n$ for some integer $k$ present additional difficulties compared to their counterparts on the whole of $\F_p^n$. On the latter, we have an equivalence between linear independence and joint equidistribution: a $k$-tuple of mod-$p$ forms is jointly equidistributed (that is, it takes each value of $\F_p^k$ with probability $p^{-k}$) if and only if it is a linearly independent family. On $\{0,1\}^n$, substantially more complicated behaviour arises (provided that $p \ge 3$): some forms such as $x_1$ do not even have full range, so even for $k=1$ there is something to be said: if the mod-$p$ form depends on a large number $r$ of coordinates, then as we will recall in Proposition \ref{equidistribution for k-tuples of mod-p forms} and its proof a Fourier-analytic calculation shows that the form is approximately equidistributed, with an error term decreasing exponentially with $r$. If on the other hand the mod-$p$ form only depends on some $r$ coordinates, then divisibility arguments provide a lower bound on the distance to equidistribution (using that $p \ge 3$): the probability that the form takes any given value is an integer multiple of $2^{-r}$, so its difference with $1/p$ is at least $1/2^rp$ in absolute value.

Moving from one to two mod-$p$ forms $\phi_1$, $\phi_2$, we again do not have approximate joint equidistribution nor even approximate independence of the events \[\phi_1(x) = y, \phi_2(x) = z\] for general linearly independent $\phi_1$, $\phi_2$, and general $y,z$ when $x$ is chosen at random in $\{0,1\}^n$ even if $\phi_1, \phi_2$ both depend on many coordinates: this can be seen by taking $\phi_2 = \phi_1 + x_1$ for an arbitrary choice of $\phi_1$. Rather, Fourier analysis there again shows (by a short calculation, of the kind that we shall do in a moment when proving Proposition \ref{equidistribution for k-tuples of mod-p forms}) that a sufficient condition for $\phi_2(x)$ to be approximately equidistributed conditionally on any value of $\phi_1(x)$ is that $\phi_2 - b \phi_1$ has large support for any $b \in \F_p$, and that a sufficient condition for the pair $(\phi_1(x), \phi_2(x))$ to be approximately equidistributed is that $a \phi_1 + b \phi_2$ has large support for any $(a,b) \in \F_p^2 \setminus \{0\}$. That condition, in turn, extends to $k$-tuples of mod-$p$ forms for any $k \ge 1$, and was already studied in \cite{Gowers and K approximation}. We recall the relevant definition, statement and proof as starting points.

\begin{definition}

Let $p$ be a prime. If $\phi: \F_p^n \to \F_p$ is a mod-$p$ form defined by \[\phi(x) = a_1 x_1 + \dots + a_n x_n,\] then we say that the \emph{support} $Z(\phi)$ is the set \[\{i \in [n]: a_i \neq 0\},\] and that the \emph{support size} of $\phi$ is the size of $Z(\phi)$.

If $k \ge 1$, $r \ge 0$ are integers, then we say that mod-$p$ forms $\phi_1, \dots, \phi_k$ are \emph{$r$-separated} if the support size of the linear combination \[a_1 \phi_1 + \dots + a_k \phi_k\] is at least $r$ for every $(a_1, \dots, a_k) \in \F_p^k \setminus \{0\}$.

\end{definition}

\begin{proposition} [\cite{Gowers and K approximation}, Proposition 2.4]\label{equidistribution for k-tuples of mod-p forms}

Let $p$ be a prime, let $S$ be a subset of $\F_p$ with size at least $2$, and let $k \ge 1$, $r \ge 0$ be integers. If $\phi_1, \dots, \phi_k: \F_p^n \to \F_p$ are $r$-separated mod-$p$ forms, then for every $(y_1, \dots, y_k) \in \F_p^k$ we have \[|\P_{x \in \{0,1\}^n}(\phi_1(x) = y_1, \dots, \phi_k(x) = y_k) - p^{-k}| \le (1-p^{-2})^r.\]

\end{proposition}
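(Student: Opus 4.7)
The approach I would take is direct Fourier analysis on $\F_p$. First, expand the indicator of the joint event as an average of additive characters:
\[
\mathbf{1}[\phi_1(x) = y_1, \dots, \phi_k(x) = y_k] = p^{-k}\sum_{a \in \F_p^k} \omega^{\sum_i a_i(\phi_i(x) - y_i)},
\]
where $\omega = e^{2\pi i/p}$. Averaging over $x$ uniform in $\{0,1\}^n$, the term $a = 0$ contributes exactly $p^{-k}$, so the quantity to bound becomes
\[
\Bigl|\,p^{-k}\sum_{a \in \F_p^k \setminus \{0\}} \omega^{-\sum_i a_i y_i}\, \E_x\, \omega^{\psi_a(x)}\,\Bigr|, \qquad \psi_a := a_1 \phi_1 + \dots + a_k \phi_k.
\]

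For each nonzero $a$, I would exploit the independence of the coordinates of $x$. Writing $\psi_a(x) = \sum_j c_j(a)\, x_j$ with $c_j(a) \in \F_p$, the character sum factors as
\[
\E_x\, \omega^{\psi_a(x)} = \prod_j \frac{1 + \omega^{c_j(a)}}{2}.
\]
Each factor with $c_j(a) = 0$ equals $1$, while each factor with $c_j(a) \neq 0$ has modulus $|\cos(\pi c_j(a)/p)| \le \cos(\pi/p)$. By the $r$-separation hypothesis applied to the nonzero vector $a$, the form $\psi_a$ has support size at least $r$, so at least $r$ of the $c_j(a)$ are nonzero, and therefore $|\E_x\, \omega^{\psi_a(x)}| \le \cos(\pi/p)^r$.

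It then remains to check the elementary estimate $\cos(\pi/p) \le 1 - p^{-2}$. I would use the identity $\cos(\pi/p) = 1 - 2\sin^2(\pi/(2p))$ together with the standard bound $\sin(t) \ge (2/\pi)\, t$ for $t \in [0, \pi/2]$, which yields $\sin(\pi/(2p)) \ge 1/p$ and hence $\cos(\pi/p) \le 1 - 2/p^2 \le 1 - p^{-2}$. Applying the triangle inequality to the outer sum (at most $p^k - 1$ terms, each of modulus at most $(1-p^{-2})^r$) and combining with the prefactor $p^{-k}$ gives the claimed bound $(1 - p^{-2})^r$.

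There is no real obstacle: the whole argument is a Fourier expansion of events combined with factorisation over the coordinates of the product distribution on $\{0,1\}^n$, and the $r$-separation hypothesis is tailored precisely so that every nonzero frequency produces a character sum with at least $r$ active coordinates. The only step requiring any care is the trigonometric inequality at the end, which is entirely routine.
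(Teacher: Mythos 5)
Your proof is correct and follows essentially the same route as the paper's: expand the indicator of the joint event by characters, factor the Fourier coefficient over the independent coordinates, bound each nonzero coefficient by $(1-p^{-2})^{|Z(a_1\phi_1+\dots+a_k\phi_k)|}$ using the $r$-separation hypothesis, and finish with the triangle inequality. The only difference is that you derive the single-coordinate estimate $|\E_{u\in\{0,1\}}\omega^{tu}|\le 1-p^{-2}$ explicitly by an elementary trigonometric bound, whereas the paper cites it (in the form $|\E_{u\in S}\omega^{tu}|\le 1-p^{-2}$ for a general alphabet $S$, which its proof actually works with) from an earlier lemma.
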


\begin{proof}

The probability on the left-hand side can be expressed as \begin{align*} \E_{x \in S^n} 1_{\phi_1(x) = y_1} \dots 1_{\phi_k(x) = y_k}
= & \E_{x \in S^n} \left( (\E_{a_1 \in \F_p} \omega_p^{a_1(\phi_1(x) - y_1)}) \dots (\E_{a_k \in \F_p} \omega_p^{a_k(\phi_k(x) - y_k)}) \right) \\
= & \E_{a \in \F_p^k} F(a) \end{align*} where $F(a)$ is the Fourier coefficient \[\E_{x \in S^n} \omega_p^{a_1(\phi_1(x) - y_1) + \dots + a_k(\phi_k(x) - y_k)}.\] The contributions of each coordinate of $x$ are independent, so for a fixed $a \in \F_p^k$ we may decompose \[F(a) = \omega_p^{-(a_1 y_1 + \dots + a_ky_k)} \prod_{z=1}^n \E_{x_z \in S} \omega_p^{(a_1 \phi_{1,z} + \dots + a_k \phi_{k,z})x_z}\] where $\phi_{i,z} \in \F_p$ is the coefficient of the form $\phi_i$ at the coordinate $z$ for every $i \in [k]$ and every $z \in [n]$. It is not difficult to show (see for instance, \cite[Lemma 2.1]{Gowers and K approximation} and its proof) that whenever $t \in \F_p^*$ we have $|\E_{u \in S} \omega_p^{tu}| \le 1-p^{-2}$. Therefore \[|F(a)| \le (1-p^{-2})^{|Z(a_1 \phi_1 + \dots + a_k \phi_k)|}\] for every $a \in \F_p^k$. Using $F(0) = p^{-k}$ and the assumption, the desired inequality follows. \end{proof}

The reader not familiar with mod-$p$ linear forms restricted to $\{0,1\}^n$ may also want to view the previously described behaviours as analogous to those that arise when considering quadratic forms on the whole of $\F_p^n$. Indeed, for $p \ge 3$ a non-zero quadratic form $\F_p^n \to \F_p$ is not necessarily approximately equidistributed (or even surjective), but if it has sufficiently high rank then it is. Likewise, linearly independent $k$-tuples of quadratic forms $(q_1, \dots, q_k)$ are not approximately equidistributed in general, but if every linear combination \[a_1 q_1 + \dots + a_k q_k\] of them has large enough rank then they are. The first of these facts follows from diagonalising the quadratic form, and a generalisation to polynomials is discussed in a landmark paper \cite{Green and Tao} of Green and Tao. The second fact reduces to the first by a standard Fourier-analytic calculation.

\subsection{Main results}

Previous works \cite{Gowers and K approximation}, \cite{K. correlation} relied primarily on Proposition \ref{equidistribution for k-tuples of mod-p forms} to show that a set defined by linear mod-$p$ conditions has small density. One quantitative way in which Proposition \ref{equidistribution for k-tuples of mod-p forms} may be viewed as not so strong is that in order to immediately deduce from Proposition \ref{equidistribution for k-tuples of mod-p forms} an upper bound of the type \[|\P_{x \in \{0,1\}^n}(\phi_1(x) = y_1, \dots, \phi_k(x) = y_k) - p^{-k}| \le cp^{-k}\] for some fixed $c>0$, the lower bound $r$ on the supports of the linear combinations \[a_1 \phi_1 + \dots + a_k \phi_k\] with $a \in \F_p^{k} \setminus \{0\}$ that we must take grows linearly in $k$.

In particular, if we consider strict subsets $E_1, \dots, E_k$ of $\F_p$ and the set $\Lambda$ of $x \in \{0,1\}^n$ satisfying all conditions \[\phi_1(x) \in E_1, \dots, \phi_k(x) \in E_k,\] as has been done in \cite{Gowers and K approximation}, then it suffices for $E_1, \dots, E_k$ to have size $2$ or more for the upper bound on the density of $\Lambda$ that comes from applying the bounds from Proposition \ref{equidistribution for k-tuples of mod-p forms} to be \[2^k (1-p^{-2})^r\] and for that upper bound to be meaningful for large $k$ we must then require $r$ to be at least linear in $k$.

Our main contribution in the present paper will be to show that there exists some lower bound on $r$ that is \emph{uniform} in $k$, and which suffices to guarantee that the density of $\Lambda$ tends to $0$ as $k$ tends to infinity. We will state and prove our results in the general case where the alphabet of the variables is an arbitrary subset $S$ of $\F_p$ containing at least two elements, as the proofs in this setting are the same as those of the special case $S = \{0,1\}$.

\begin{definition}

Let $p$ be a prime, let $S$ be a subset of $\F_p$ with size at least $2$, and let $E$ be a strict subset of $\F_p$. Then there exists a smallest nonnegative integer \begin{equation} L=L(S,E) \le \lfloor \frac{|E|-1}{|S|-1} \rfloor + 1 \le |E|\label{inequality on L} \end{equation} such that for any $a_1, \dots, a_L \in \F_p^*$ the set \begin{equation} a_1 S + \dots + a_L S \label{generating F_p} \end{equation} is not contained in $E$. We will write $L(S)$ for $L(S,\F_p)$.

\end{definition}

The existence of $L$ follows from the $k=1$ case of Proposition \ref{equidistribution for k-tuples of mod-p forms}, but it also follows immediately from the Cauchy-Davenport inequality, which furthermore provides the inequality \eqref{inequality on L}. We note that if $S = \{0, 1, \dots, |S|-1\}$ and $E = \{0, 1, \dots, |E|-1\}$, then \eqref{inequality on L} becomes an equality. We are now ready to state our main result in this paper.

\begin{theorem} \label{Main result}

Let $p$ be a prime, and let $S$ be a subset of $\F_p$ with size at least $2$. Then there exists some $a=a(p,S)>0$ such that the following holds. If $k \ge 1$ is an integer, $E_1, \dots, E_k$ are strict subsets of $\F_p$, and $\phi_1, \dots, \phi_k$ are mod-$p$ forms satisfying \begin{equation} |Z(\phi_j - \phi_i)| \ge 2 \max_{1 \le t \le k} L(S,E_t) - 1 \label{assumption of main result} \end{equation} for any $1 \le i < j \le k$, then the density of points $x \in S^n$ satisfying \[\phi_1(x) \in E_1, \dots, \phi_k(x) \in E_k\] is at most $O(k^{-a/\log \log k})$ as $k$ tends to infinity.

\end{theorem}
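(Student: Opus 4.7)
The plan is to run a greedy process that selects a chain of forms from $\phi_1,\dots,\phi_k$, where each addition cuts down the conditional density by a fixed multiplicative factor, and to show that the chain has length at least of order $\log k/\log\log k$. As a first reduction, a pigeonhole over the constantly many possible strict subsets of $\F_p$ lets me pass to a sub-family on which every $E_i$ equals a single strict subset $E$, at the cost of a $p$-dependent constant factor in $k$ that gets absorbed in the implied constant. Writing $L=L(S,E)$, the hypothesis becomes $|Z(\phi_j-\phi_i)|\ge 2L-1$ for all $i\ne j$.

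The one-step density reduction is the following. Suppose $\phi_{i_1},\dots,\phi_{i_t}$ have been selected and set $Z_t=\bigcup_{s\le t} Z(\phi_{i_s})$. If an unselected form $\phi$ has $|Z(\phi)\setminus Z_t|\ge L$, then $\P(\phi(x)\in E \mid \phi_{i_s}(x)\in E\;\forall s\le t)\le 1-|S|^{-L}$. To prove this, fix $J\subseteq Z(\phi)\setminus Z_t$ with $|J|=L$ and partition $S^n$ into equivalence classes of size $|S|^L$ obtained by fixing $x|_{[n]\setminus J}$. Because each previously selected $\phi_{i_s}$ has its support in $Z_t$ and is therefore constant on each class, the conditioning event is a union of whole classes; on any such class, $\phi(x)$ takes the affine form $c'+\sum_{z\in J}a_zx_z$ with $a_z\in\F_p^*$, and the defining property of $L$, together with the shift-invariant upper bound $\lfloor(|E|-1)/(|S|-1)\rfloor+1$ in \eqref{inequality on L} used to accommodate the translate by $c'$ if necessary, yields at least one choice of $x|_J\in S^J$ placing $\phi(x)$ outside $E$.

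Iterating this selection greedily produces a chain of length $m$ and a density bound of $(1-|S|^{-L})^m$. When the process can no longer be extended, every remaining form $\phi$ satisfies $|Z(\phi)\setminus Z_m|\le L-1$; the pairwise-difference hypothesis combined with the inclusion $Z(\phi-\phi')\setminus Z_m\subseteq(Z(\phi)\setminus Z_m)\cup(Z(\phi')\setminus Z_m)$ then gives $|Z(\phi-\phi')\cap Z_m|\ge (2L-1)-2(L-1)=1$ for any two such ``trapped'' forms $\phi,\phi'$, so their restrictions to $Z_m$ are pairwise distinct and there are at most $p^{|Z_m|}$ of them. Adding the $m$ selected forms produces the bookkeeping inequality $k\le m+p^{|Z_m|}$.

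The main obstacle is turning this into the quantitative bound $m\ge c\log k/\log\log k$. The relation $k\le m+p^{|Z_m|}$ by itself still permits a short chain with a large final $|Z_m|$, so I would refine the greedy rule to always pick an admissible form minimizing $|Z(\phi)\setminus Z_t|$, and then argue that the average marginal along the process is $O(\log\log k)$: if at some stage every admissible form had marginal exceeding $\log\log k$, the projections of the many remaining forms to $[n]\setminus Z_t$ would form a code of minimum distance $\ge 2L-1$ whose cardinality, added to the bound $p^{|Z_t|}$ on trapped forms, would contradict the count $k$ through a packing estimate. Balancing the counting of trapped forms by their restrictions to $Z_m$ against that of admissible forms by their projections to $[n]\setminus Z_m$ is the technical heart of the argument, and is what I expect to yield the exponent $a/\log\log k$ via $(1-|S|^{-L})^m\le\exp(-m/|S|^L)$.
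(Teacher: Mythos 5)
Your framework (greedy chain plus counting trapped forms) is a genuinely different skeleton from the paper's, which instead organizes the argument around sunflowers (Proposition \ref{sunflower case}), an Erd\H os--Rado--type lemma producing sunflowers inside balls (Proposition \ref{sunflowers from balls}), and a final dichotomy: either some $u$-tuple of the $\phi_i$ is $r$-separated so that Proposition \ref{equidistribution for k-tuples of mod-p forms} gives an exponentially small probability, or else all forms lie in at most $p^u$ balls of radius $r$ and pigeonhole feeds a large ball into Corollary \ref{corollary after balls case}. Your Steps 1--3 are sound: the reduction to a single $E$, the one-step conditional reduction by $1-|S|^{-L}$ (the translate issue you flag is in fact harmless, since minimality of $L(S,E)$ together with Cauchy--Davenport forces $|a_1S+\dots+a_LS|>|E|$, so no translate of $E$ can contain it), and the count $k\le m+p^{|Z_m|}$ for the trapped forms all check out.

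The gap is Step 4, and it is genuine. First, the ``code of minimum distance $\ge 2L-1$'' claim for projections to $[n]\setminus Z_t$ is false as stated: two admissible forms whose difference is supported entirely inside $Z_t$ project to the \emph{same} vector, so the minimum distance can be $0$. The natural fix is to first bucket by restriction to $Z_t$ (there are $\le p^{|Z_t|}$ buckets) and observe that within a bucket the projections do have pairwise distance $\ge 2L-1$; but a code of fixed minimum distance in $\F_p^{[n]\setminus Z_t}$ can have arbitrarily many codewords when the length is unbounded, so there is no packing estimate to contradict $k$. More fundamentally, the bookkeeping inequality $k\le m+p^{|Z_m|}$ does not constrain $m$ at all if a single greedy increment $d_t=|Z(\phi_{i_t})\setminus Z_{t-1}|$ is as large as $\log_p k$: one could have $m=O(1)$, $|Z_m|\approx\log_p k$, and the inequality is satisfied, while the density bound $(1-|S|^{-L})^m$ is only a constant. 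This is exactly the regime Example \ref{Example 4} lives in. What is missing is the observation that when the increments are large, one should not use the crude $1-|S|^{-L}$ bound but rather exploit approximate equidistribution: a large increment means $\phi$ has many fresh nonzero coefficients outside $Z_t$, so conditioned on any class the value $\phi(x)$ is nearly uniform and the reduction factor is close to $|E|/p$. Capturing the right tradeoff between the two regimes is precisely what the paper's dichotomy (find a well-separated $u$-tuple, or else a small ball) and Proposition \ref{equidistribution for k-tuples of mod-p forms} accomplish; without an analogue of that dichotomy, the greedy argument does not close.
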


In the case where the sets $E_1, \dots, E_k$ all have size bounded above by some common value strictly less than $p$, Theorem \ref{Main result} specialises as follows thanks to the inequality \eqref{inequality on L}.

\begin{corollary} \label{Corollary of main result}

Let $p$ be a prime, let $S$ be a subset of $\F_p$ with size at least $2$, and let $1 \le r \le p-1$ be an integer. Then there exists some $a=a(p,S,r)>0$ such that the following holds. If $k \ge 1$ is an integer, $E_1, \dots, E_k$ are strict subsets of $\F_p$ each with size at most $r$, and $\phi_1, \dots, \phi_k$ are mod-$p$ forms satisfying \[|Z(\phi_j - \phi_i)| \ge 2 \lfloor \frac{r-1}{|S|-1} \rfloor + 1\] for any $1 \le i < j \le k$, then the density of points $x \in S^n$ satisfying \[\phi_1(x) \in E_1, \dots, \phi_k(x) \in E_k\] is at most $O(k^{-a/ \log \log k})$ as $k$ tends to infinity.

\end{corollary}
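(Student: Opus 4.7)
The plan is to deduce this directly from Theorem \ref{Main result} together with the inequality \eqref{inequality on L} from the definition of $L(S,E)$, treating Theorem \ref{Main result} as a black box. So there is essentially no new proof content; the task is simply to verify that the hypothesis of Corollary \ref{Corollary of main result} implies the hypothesis \eqref{assumption of main result} of Theorem \ref{Main result}.

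First, I would take each of the strict subsets $E_t \subseteq \F_p$ (with $|E_t| \le r$) in turn and apply the right-hand inequality in \eqref{inequality on L}, which gives
\[L(S,E_t) \le \lfloor \tfrac{|E_t|-1}{|S|-1} \rfloor + 1 \le \lfloor \tfrac{r-1}{|S|-1} \rfloor + 1,\]
using monotonicity of the floor function and of $(x-1)/(|S|-1)$ in $x$. Taking the maximum over $t \in [k]$ and multiplying by $2$ then subtracting $1$, this yields
\[2 \max_{1 \le t \le k} L(S,E_t) - 1 \le 2\lfloor \tfrac{r-1}{|S|-1} \rfloor + 1.\]

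Hence the assumption $|Z(\phi_j - \phi_i)| \ge 2\lfloor (r-1)/(|S|-1) \rfloor + 1$ of the corollary implies the assumption \eqref{assumption of main result} of Theorem \ref{Main result} for these particular $E_1,\dots,E_k$. Applying Theorem \ref{Main result} now produces a constant $a = a(p,S)>0$ (which of course depends on $r$ only through $p$ and $S$, but we allow dependence on $r$ in the statement of the corollary to match the stated quantifiers) such that the density of $x \in S^n$ with $\phi_1(x) \in E_1, \dots, \phi_k(x) \in E_k$ is $O(k^{-a/\log \log k})$, giving the desired conclusion.

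Since this is a direct specialisation, there is no real obstacle; the only mild subtlety is keeping track of the fact that the integer $\lfloor (r-1)/(|S|-1)\rfloor + 1$ appearing in the corollary really is an upper bound for $\max_t L(S,E_t)$ uniformly in $k$, which is exactly what \eqref{inequality on L} guarantees and why the hypothesis of Theorem \ref{Main result} can be met by a condition depending only on $p$, $S$, and $r$ rather than on the $E_t$ themselves.
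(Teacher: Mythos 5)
Your proposal is correct and is exactly the paper's intended argument: the paper introduces Corollary \ref{Corollary of main result} with the single remark that it ``specialises'' Theorem \ref{Main result} via the inequality \eqref{inequality on L}, which is precisely the chain $L(S,E_t) \le \lfloor (|E_t|-1)/(|S|-1)\rfloor + 1 \le \lfloor (r-1)/(|S|-1)\rfloor + 1$ that you spelled out. Nothing is missing, and your brief note that $a$ does not in fact need to depend on $r$ is an accurate reading of the quantifiers.
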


The bound in the assumption of Theorem \ref{Main result} is optimal, at least in the case where the sets $E_1, \dots, E_k$ are the same set $E$: indeed, by definition of $L=L(S,E)$ we can find $a_1, \dots, a_{L-1}$ such that \[a_1 S + \dots + a_{L-1} S \subset E.\] Taking $\phi_1, \dots, \phi_k$ to be mod-$p$ forms with pairwise disjoint supports, and with non-zero coefficients equal to $a_1, \dots, a_{L-1}$ (respecting multiplicities), we have that $\phi_i(S^n) \subset E$ for every $i \in [k]$, and that \[|Z(\phi_j - \phi_i)| = |Z(\phi_j)| + |Z(\phi_i)| = 2L-2\] for any $1 \le i < j \le k$.

We are about to use the following proposition, proved in \cite{Gowers and K approximation}, which informally states that the probability that a given mod-$p$ form takes any prescribed value is either $0$ or bounded below by some positive quantity. Proposition \ref{Lower bound on non-zero probabilities} will again be used in Section \ref{Section: Proof of the main result}.

\begin{proposition}[\cite{Gowers and K approximation}, Proposition 2.5]\label{Lower bound on non-zero probabilities}  Let $p$ be a prime, let $S$ be a non-empty subset of $\F_p$, and let $\phi: \F_p^n \rightarrow \F_p$ be a linear form. Then for every $y \in \F_p$ we have \[\P_{x \in S^n}(\phi(x) = y) = 0 \text{ or }\P_{x \in S^n}(\phi(x) = y) \ge \b(p,S) \] where $\b(p,S) = |S|^{-\lceil(p-1)/(|S|-1)\rceil}$.\end{proposition}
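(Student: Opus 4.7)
My plan is to condition on the size $r := |Z(\phi)|$ of the support of $\phi$ and compare it with the threshold $r_0 := \lceil (p-1)/(|S|-1) \rceil$. Because $\phi(x)$ depends only on the coordinates indexed by $Z(\phi)$, the probability $\P_{x \in S^n}(\phi(x) = y)$ is always a non-negative integer multiple of $|S|^{-r}$. In the easy regime $r \le r_0$, any non-zero value of that probability is thus at least $|S|^{-r} \ge |S|^{-r_0} = \b(p,S)$, and the case $\phi \equiv 0$ is trivial; so the content of the proposition lies entirely in the regime $r > r_0$.

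In that regime I would actually establish the stronger statement that the probability is always at least $\b(p,S)$, and in particular never zero. The key input is the Cauchy-Davenport inequality applied iteratively. Picking any subset $I = \{i_1, \dots, i_{r_0}\}$ of $Z(\phi)$ of size $r_0$, each $a_{i_j} S$ has size $|S|$ since $a_{i_j} \in \F_p^*$, and iterated Cauchy-Davenport gives
\[
|a_{i_1} S + \dots + a_{i_{r_0}} S| \ge \min(r_0(|S|-1) + 1, \, p) = p
\]
by the very definition of $r_0$. Hence the sumset equals all of $\F_p$.

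To finish, I would decompose $\phi(x) = \sum_{j=1}^{r_0} a_{i_j} x_{i_j} + \sum_{i \in Z(\phi) \setminus I} a_i x_i$ and freeze the variables outside $I$. For any value of $(x_i)_{i \notin I} \in S^{n-r_0}$ the surjectivity above yields at least one completion $(x_{i_j})_{j=1}^{r_0} \in S^{r_0}$ realising $\phi(x) = y$, hence at least $|S|^{n-r_0}$ solutions altogether, and a probability of at least $|S|^{-r_0} = \b(p,S)$. I do not foresee a serious obstacle: the constant $\b(p,S)$ is calibrated precisely so that the Cauchy-Davenport threshold $r_0(|S|-1) + 1 \ge p$ is met, and the freedom to choose the $r_0$ pivot indices anywhere inside the support is exactly what turns the one-dimensional surjectivity into a multiplicative density lower bound.
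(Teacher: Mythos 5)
The paper does not prove this proposition itself; it cites it from \cite{Gowers and K approximation}, so there is no in-paper proof to compare against. Your argument is correct and is the natural one: dividing into the regime $|Z(\phi)|\le r_0$ (where divisibility by $|S|^{-|Z(\phi)|}$ forces any nonzero probability to be at least $|S|^{-r_0}=\b(p,S)$) and the regime $|Z(\phi)|>r_0$ (where iterated Cauchy--Davenport makes an $r_0$-coordinate slice of $\phi$ surjective onto $\F_p$, giving at least $|S|^{n-r_0}$ solutions after freezing the other variables) exactly calibrates the threshold $r_0=\lceil(p-1)/(|S|-1)\rceil$ and yields the stated constant. The only (cosmetic) gap is that the statement as written allows $|S|=1$, for which $\b(p,S)$ is undefined; the intended hypothesis is $|S|\ge 2$, which your proof implicitly uses.
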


We note that the bounds in Theorem \ref{Main result} and in Corollary \ref{Corollary of main result} cannot be too far from tight, because there are simple examples satisfying power lower bounds in the other direction (and furthermore, uniformly with respect to $r \ge 1$). 

\begin{example}\label{Conclusion not too far from optimal bounds in main theorem}

Let $r,t \ge 1$ be integers and let $k = p^t$. Let $\psi_1, \dots, \psi_t: \F_p^n \to \F_p$ be linear forms with pairwise disjoint supports each with size at least $r$. The $k$ linear combinations $\phi_1, \dots, \phi_k$ of $\psi_1, \dots, \psi_t$ then satisfy $|Z(\phi_j - \phi_i)| \ge r$ for any $1 \le i < j \le k$, but the set \[\{x \in \{0,1\}^n: \phi_1(x) = 0, \dots, \phi_k(x) = 0\}\] is the same as the set \[\{x \in \{0,1\}^n: \psi_1(x) = 0, \dots, \psi_t(x) = 0\},\] so by Proposition \ref{Lower bound on non-zero probabilities} has density at least $2^{-(p-1)t} = k^{-a}$ where $a = \log (2^{p-1}) / \log p$.

\end{example}

As Example \ref{Conclusion not too far from optimal bounds in main theorem} illustrates, the assumption that the difference of any two distinct mod-$p$ forms of some family has support size bounded below by some integer is in particular insufficient on its own to guarantee that the joint distribution of the family of forms is anywhere near uniform. Towards the end of Section \ref{Section: Constant separation is much weaker than approximate independence}, we will discuss an example (Example \ref{Example 4}) where a power lower bound on the density can be obtained even for a set defined in terms of conditions where the linear forms satisfy the much stronger condition of being $r$-separated. (However, the bound will not be uniform with respect to large $r$ this time.) This will in turn illustrate that this stronger assumption does not remotely come close to ensuring approximate equidistribution either.

More generally, Section \ref{Section: Constant separation is much weaker than approximate independence} will as a whole focus on $r$-separation for a fixed $r$ and on illustrating the various ways that it comes short of guaranteeing several properties that are guaranteed by the assumption of Proposition \ref{equidistribution for k-tuples of mod-p forms}. By contrast, in Section \ref{Section: Proof of the main result} we shall prove Theorem \ref{Main result}, the assumption of which is qualitatively weaker than separation. Finally, Section \ref{Section: Open questions} will be devoted to some open questions.

Throughout, we will often write $(\phi, E)$ for a condition $\phi(x) \in E$ where $x$ is an element of $S^n$, as was done in \cite{Gowers and K approximation}. If $\phi, \psi$ are two mod-$p$ forms, then we will say that the \emph{support distance} (or more simply \emph{distance}) between $\phi$ and $\psi$ is the support size of $\phi - \psi$. If $\phi_0$ is a mod-$p$ form and $r \ge 0$ is an integer, then we will say that the \emph{ball} with \emph{radius} $r$ centered at $\phi_0$ is the set of mod-$p$ forms $\phi$ such that the support size of $\phi - \phi_0$ is at most $r$.

\section*{Acknowledgements}

The author is grateful to Timothy Gowers for helpful discussions in Spring 2019, when the content of the present paper arose. The author also thanks Noga Alon and Swastik Kopparty for encouraging comments on related projects involving restrictions to $\{0,1\}^n$ of objects defined on $\F_p^n$ and for their optimism about this area. This research received funding from an EPSRC International Doctoral Scholarship (project number 2114524).

\section{Constant separation is much weaker than approximate independence} \label{Section: Constant separation is much weaker than approximate independence}

Throughout this section we fix a prime $p \ge 3$. We will construct examples of situations where linear forms that are $r$-separated for some fixed integer $r$ (or, in Subsection \ref{Subsection: Adding a new condition with small or absent effects}, rather satisfy a variant of this assumption that is focused on the last mod-$p$ form) nonetheless exhibit behaviour that is far from joint independence in the probabilistic sense, in various ways. We will take $S = \{0,1\}$ for simplicity, but the examples can be adapted rather straightforwardly to any non-empty $S \neq \F_p$.

\begin{example} \label{Example 1} \emph{The most basic example is the case $r=1$, that is, the assumption that the linear forms are merely linearly independent. The linear forms $x_1 + x_i$ with $i \neq 1$ are not approximately independent in the probabilistic sense, and that is even more striking if we furthermore add the linear form $x_1$ to the family, since the value of $x_1$ then forces the values of all $x_1 + x_i$ to belong to $\{x_1\} + \{0,1\}$. This example, and examples of sunflower type more generally, were the starting point of the paper \cite{Gowers and K approximation} and have been foundational for its proofs.} \end{example}

\subsection{Adding a new condition with small or absent effects}\label{Subsection: Adding a new condition with small or absent effects}

Let $r \ge 1$ be any fixed integer. In this subsection we focus on the effect of an additional condition, and correspondingly we will not work under the full assumption that the mod-$p$ forms that we consider are $r$-separated, but that, informally speaking, the last condition is $r$-separated from all previous ones. More precisely our setting will be to consider “preexisting” mod-$p$ forms $\phi_1, \dots, \phi_k$, and a “new” mod-$p$ form $\phi$ satisfying \begin{equation}|Z(\phi - \sum_{i=1}^k a_i \phi_i)| \ge r \label{r-separation focused on the last form} \end{equation} for every $a \in \F_p^k$. We begin by an example showing that it is possible to find an integer $k \ge 1$, mod-$p$ forms $\phi_1, \dots, \phi_k$, and a mod-$p$ form $\phi$ satisfying \eqref{r-separation focused on the last form} for every $a \in \F_p^k$ and yet still have that for some subsets $E_1, \dots, E_k,E$ with $E \neq \F_p$ the condition $\phi(x) \in E$ is implied by the conditions $\phi_i(x) \in E_i$ in the sense that every $x \in \{0,1\}^n$ satisfying the latter also satisfies the former.

\begin{example} \emph{Let $q = (p-1)/2$. We take $k = \lceil r/q \rceil$. For each $i \in [k]$ we define \begin{align*} \psi_i & = x_{(2i-2)q+1} + \dots + x_{2(i-1)q}\\
\rho_i & = x_{(2i-1)q+1} + \dots + x_{2iq}\\
\phi_i & = \psi_i + \rho_i \end{align*} and then further define \[\phi = \psi_1 + \dots + \psi_k.\] For every $a \in \F_p^k$ the linear combination \begin{equation} \phi - \sum_{i=1}^k a_i \phi_i \label{combination of which we take support} \end{equation} has support size at least $kq \ge r$. Indeed, letting $J(a) \subset [k]$ be the set \[ \{i \in [k]: a_i \neq 0\},\] the support of the linear combination \eqref{combination of which we take support} contains the disjoint union \[ \bigcup_{i \in J(a)} Z(\rho_i) \bigcup \bigcup_{i \in [k] \setminus J(a)} Z(\psi_i), \] which has size $kq$. But if $x \in \{0,1\}^n$ satisfies $\phi_i(x)=0$ for each $i \in [k]$, then $\psi_i(x) = 0$ for each $i \in [k]$ and hence $\phi(x) = 0$.} \end{example}

We now discuss another example, showing that even if some conditions $(\phi_i, E_i)$ do not imply some condition $(\phi,E)$ in the sense from the previous example, the ratio of the density of the subset \[\{x \in \{0,1\}^n: \phi_1(x) \in E_1, \dots, \phi_k(x) \in E_k, \phi(x) \in E\}\] over the density of the subset \[\{x \in \{0,1\}^n: \phi_1(x) \in E_1, \dots, \phi_k(x) \in E_k\}\] can be arbitrarily close to $1$.

\begin{example}

\emph{Let $k$ be a fixed integer which we will later take to be large enough. We let $\psi_0, \psi_1, \dots, \psi_k$ be the mod-$p$ forms defined by \[\psi_i = x_{ir+1} + \dots + x_{(i+1)r}\] for every $i \in \{0\} \cup [k]$. We then take $\phi_i = \psi_0 + \psi_i$ for each $i \in [k]$ and $\phi = \psi_0$. By definition the linear forms $\psi_1, \dots, \psi_k$ all have the same distribution $D$ on $\{0,1\}^n$. The probability mass that this distribution puts on any element of $\F_p$ is an integer multiple of $2^{-r}$, and that cannot be $1/p$. This shows in particular that $D$ is not perfectly uniform.}

\emph{Our construction is designed to amplify the effect of the lack of uniformity of $D$ on the distribution of $\phi$ given suitable values of $\phi_1, \dots, \phi_k$. We assume for simplicity that $r \ge p-1$, so that \[\P(\phi =y) > 0, D(v) > 0\] for all $y,v \in \F_p$. We have \begin{align*} \P( \phi =y, \phi_1 = u, \dots, \phi_k =u) & = \P( \phi = y) \P(\psi_1 = u-y) \dots \P(\psi_k = u-y) \\ & = \P( \phi = y) D(u-y)^k. \end{align*} for all $y,u \in \F_p$. Let $E$ be the set of $y \in \F_p$ which minimise $D(u-y)$. The ratio of the density of \[ \{ x \in \{0,1\}^n: \phi_1(x) = u, \dots, \phi_k(x) = u, \phi(x) \in E \} \] over the density of \[ \{ x \in \{0,1\}^n: \phi_1(x) = u, \dots, \phi_k(x) = u \}\] then becomes exponentially close to $1$ as $k$ tends to infinity.} \end{example}

\subsection{A density with a power lower bound}

We now return from the effect of one additional condition to what occurs more globally for a system of conditions. In the next example we will show that no integer $r \ge 1$ guarantees that the set of points of $\{0,1\}^n$ satisfying a family of $r$-separated conditions has a density that (uniformly in $n$) is at most exponential in the number of conditions, and that a power lower bound on the density is even possible.

\begin{example} \label{Example 4} \emph{Let $k$ be a fixed integer, let $T=T(r,k)$ be an integer that we will choose later, let $\psi_1, \dots, \psi_k$ be mod-$p$ forms \emph{supported inside} \[\ct=\{k+1, k+2, \dots, k+T(r,k)\}\] which we will also construct later, and let $\phi_i = \psi_i + x_i$ for each $i \in [k]$.}

\emph{For the forms $\phi_1, \dots, \phi_k$ to be $r$-separated, it suffices that every linear combination \[a_1 \phi_1 + \dots + a_k \phi_k\] where at most $r-1$ values $a_1, \dots, a_k$ are non-zero has support size at least $r$, as otherwise that follows immediately from considering the contributions of $x_1, \dots, x_k$ to the support. In turn, for that it suffices that for these $a \in \F_p^k$ the linear combinations \[a_1 \psi_1 + \dots + a_k \psi_k\] have support size at least $r$.}

\emph{To satisfy that we now choose the forms $\psi_1, \dots, \psi_k$ by induction: we begin by choosing some $\psi_1$ with $|Z(\psi_1)| \ge r$, and for any $u < k$, assuming that $\psi_1, \dots, \psi_u$ have been fixed we choose $\psi_{u+1}$ to be some form supported inside $\ct$ and outside the union $U_u$ of all balls with radius $r-1$ centered at any linear combination \[a_1 \psi_1 + \dots + a_{u} \psi_u\] where at most $r-1$ values $a_1, \dots, a_u$ are non-zero. The number of such $a \in \F_p^u$ is at most \begin{equation} \sum_{i=0}^{r-1} p^i \binom{u}{i} \le p^r \binom{k}{r}\label{estimate for binomial coefficients} \end{equation} for $k$ large enough (depending on $r$) and the number of mod-$p$ forms supported inside $\ct$ and with support size at most $r-1$ is at most \[\sum_{j=0}^{r-1} p^j \binom{T}{j} \le p^r \binom{T}{r}\] for $T$ large enough (depending on $r$). The total number of mod-$p$ forms in $U_u$ is hence at most \[p^{2r} \binom{k}{r} \binom{T}{r} \le p^{2r} (kT)^r.\] Meanwhile the number of possible mod-$p$ forms supported inside $\ct$ is $p^T$, so provided that \begin{equation} p^T > p^{2r} (kT)^r \label{inequality at the end of Example 4} \end{equation} we may inductively choose $\psi_1, \dots, \psi_k$ as desired. As for \eqref{inequality at the end of Example 4}, we may (for fixed $p$ and $r$) choose $T$ which satisfies \eqref{inequality at the end of Example 4} and which has a growth rate $T = O(\log k)$ in the value of $k$.}

\emph{If $x \in \{0,1\}^n$ is such that $x_z = 0$ for every $z \in \ct$, then for every $i \in [k]$ we have $\psi_i(x) = 0$ and hence $\phi_i(x) \in \{0,1\}$. The set of points $x \in \{0,1\}^n$ satisfying $\phi_i(x) \in \{0,1\}$ for every $i \in [k]$ therefore has density at least $2^{-T}$, which is at least some negative power of $k$ (that depends on $p$ and $r$). A back-of-envelope calculation shows that for any $\e>0$ this density is at least \[k^{-((1+\e) \log 2 / \log p) r}\]  once $k$ is large enough depending on $\e$.}

\end{example}

As Example \ref{Example 4} shows, $r$-separation for a fixed $r$ is in a way not much stronger than linear independence, since linear independence of a family of $k$ mod-$p$ forms implies that the union of the supports of the $k$ forms has size at least $k$, whereas in this example it has size $k+O(\log k)$. It is hence all the more remarkable that Theorem \ref{Main result} holds with its current assumptions.

\begin{remark}

\emph{The mod-$p$ forms used in Example \ref{Example 4} satisfy an assumption that is much stronger than that of Theorem \ref{Main result}: not only does the support of the difference of any two of these forms have size at least $r$, but the support of any linear combination of them has size at least $r$. It is hence natural to wonder for a moment whether we could not improve the power lower bound still much further and contradict Theorem \ref{Main result}, by considering the linear combinations of all forms involved in Example \ref{Example 4} (as they still satisfy the assumption of Theorem \ref{Main result}). Where this line of thought hits a wall is that if \[\phi = a_1 \phi_1 + \dots + a_k \phi_k\] is a linear combination with at least $p-1$ non-zero coefficients $a_1,\dots,a_k$ then the image of the set \[\{x \in \{0,1\}^n: \forall z \in \ct, x_z=0\}\] by $\phi$ is the full of $\F_p$, as can be seen by considering the contribution of the coordinates inside $[k]$, which is independent from that of the other coordinates. Therefore, we cannot use that linear form to build a condition $(\phi, E)$ with $E \neq \F_p$ that is satisfied by this set. Since the number of other possibilities for $(a_1, \dots, a_k)$ is at most $p^{p} \binom{k}{p-1}$ (by an estimate similar to \eqref{estimate for binomial coefficients}), so at most a fixed power of $k$, the best that can be achieved by taking this enlarged set of forms is still a power lower bound in $k$ (with a power divided by $p-1$ compared to that of Example \ref{Example 4}).}

\end{remark}

\section{Proof of the main result}\label{Section: Proof of the main result}

In this section we prove Theorem \ref{Main result}. Although there are several ways of writing this proof, the choice that we ultimately made was to do so in a way that mirrors the high-level structure of that of \cite[Theorem 1.2]{Gowers and K approximation} (performed in Section 3 of that paper). In both situations the proof proceeds in three successive stages establishing the desired result, in the cases where the linear forms that we start with (i) constitute a sunflower, (ii) are contained in a ball with bounded radius, and finally (iii) are completely arbitrary. Let us begin by recalling the relevant formal definition of a sunflower.

\begin{definition}

Let $p$ be a prime, let $I$ be a finite set, let $\phi_i: \F_p^n \to \F_p$ be a linear form for every $i \in I$, and let $\phi_0$ be an additional such form. We say that the forms $\phi_0+\phi_i$ with $i \in I$ constitute a \emph{sunflower} with \emph{center} $\phi_0$ if the supports of the forms $\phi_i$ with $i \in I$ are pairwise disjoint. \end{definition}

The assumption of Theorem \ref{Main result} is that any two distinct forms differ by at least some number $M$ of coefficients. Our intermediate statements in stages (i) and (ii) will be more convenient to use and to prove with a modified assumption: that after subtracting the centre of the sunflower or of the ball to all forms, the resulting forms all have support size bounded below by some integer $r$. Both assumptions are of course closely related: the triangle inequality on the support distance immediately shows that for $r = \lceil M/2 \rceil$ the first implies the second except for possibly one of the forms.

\subsection{Exponentially small densities from the sunflower structure}

The most basic structure on mod-$p$ forms that we will use to ensure that a set of points of $S^n$ satisfying many conditions involving these forms has small density - exponentially small in the number of forms for now, although not for long - is a sunflower.

\begin{proposition} \label{sunflower case} Let $p$ be a prime and let $k \ge 1$ be an integer. Suppose that $E_1, \dots, E_k$ are strict subsets of $\F_p$ and that $\phi_0, \phi_1, \dots, \phi_k: \F_p^n \to \F_p$ are linear forms such that $\phi_0 + \phi_1, \dots, \phi_0 + \phi_k$ constitute a sunflower with center $\phi_0$ and $\phi_1, \dots, \phi_k$ each have support size at least $\max_{1 \le i \le k} L(S,E_i)$. Then the set \[\{x \in S^n: (\phi_0+\phi_1)(x) \in E_1, \dots, (\phi_0+\phi_k)(x) \in E_k \}\] has density at most $p(1-\b(p,S))^k$ inside $S^n$.

\end{proposition}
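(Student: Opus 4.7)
The plan is to decompose the probability by the value of $\phi_0(x) \in \F_p$, exploit the independence across petals that the sunflower hypothesis provides, and bound each resulting factor via Proposition \ref{Lower bound on non-zero probabilities} together with a size estimate from Cauchy-Davenport.

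First, writing $Z_i = Z(\phi_i)$, I would decompose
\[
\P_{x \in S^n}\bigl[(\phi_0+\phi_i)(x) \in E_i\ \forall i\bigr]
= \sum_{y \in \F_p} \P_{x}\bigl[\phi_0(x) = y \text{ and } \phi_i(x|_{Z_i}) \in E_i - y \text{ for all } i\bigr]
\]
and drop the constraint $\phi_0(x) = y$ to upper-bound this by
\[
\sum_{y \in \F_p} \prod_{i=1}^{k} \P\bigl[\phi_i(x|_{Z_i}) \in E_i - y\bigr],
\]
where the factorisation uses that the $Z_i$'s are pairwise disjoint, making the events $\phi_i(x|_{Z_i}) \in E_i - y$ mutually independent.

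The crux is to bound each factor by $1 - \beta(p, S)$, uniformly in $y$ and $i$. By Proposition \ref{Lower bound on non-zero probabilities}, it suffices to exhibit a value $z^\ast \in \phi_i(S^{Z_i}) \setminus (E_i - y)$. Since $\phi_i(S^{Z_i})$ is the sumset $a_1 S + \dots + a_r S$ of the $r = |Z(\phi_i)|$ nonzero coefficients of $\phi_i$, and $r \ge \max_j L(S, E_j)$, combining the Cauchy-Davenport inequality with the bound \eqref{inequality on L} shows that $|\phi_i(S^{Z_i})| \ge r(|S|-1)+1 > |E_i|$, so $\phi_i(S^{Z_i})$ is too large to fit inside any translate of $E_i$ and the desired $z^\ast$ always exists. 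Summing gives
\[
\P_x[\cdot] \le \sum_{y \in \F_p} (1-\beta(p,S))^k = p\,(1-\beta(p,S))^k.
\]

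The main obstacle is the translate-invariance step: the definition of $L(S, E_i)$ only forbids $\phi_i(S^{Z_i}) \subset E_i$, not inclusion in a general translate $E_i - y$. Upgrading the non-inclusion to a strict size inequality via Cauchy-Davenport is what handles all translates uniformly, and the role of $\max_j L(S, E_j)$ (as opposed to $L(S, E_i)$ alone) is to ensure the resulting size comparison holds simultaneously across all $i \in [k]$.
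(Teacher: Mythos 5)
Your high-level structure is exactly the paper's: decompose by the value $y$ of $\phi_0(x)$, drop that constraint, factor over the disjoint petals, bound each factor by $1-\b(p,S)$ via Proposition \ref{Lower bound on non-zero probabilities}, and sum over $y$. You also correctly identify the delicate point, namely that the definition of $L(S,E_i)$ directly rules out $\phi_i(S^{Z_i}) \subset E_i$ but one needs to exclude all translates $E_i - \{y\}$; the paper handles this tersely by remarking that $|E_i - \{y\}| = |E_i|$ and invoking the definition of $L(S,E_i)$.

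However, your Cauchy--Davenport fix has a genuine gap. You deduce $r(|S|-1)+1 > |E_i|$ from $r \ge \max_j L(S,E_j)$ ``combined with the bound \eqref{inequality on L}.'' But \eqref{inequality on L} is an \emph{upper} bound $L(S,E) \le \lfloor (|E|-1)/(|S|-1)\rfloor + 1$; to get $r(|S|-1)+1 > |E_i|$ from $r \ge L(S,E_i)$ you would need the \emph{reverse} inequality $L(S,E_i) \ge \lfloor (|E_i|-1)/(|S|-1)\rfloor + 1$, which can fail: for instance with $p=7$, $S=\{0,1\}$, $E=\{0,1,3\}$ one has $L(S,E)=2$ while $\lfloor (|E|-1)/(|S|-1)\rfloor + 1 = 3$. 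So the chain ``$r \ge L(S,E_i)$, hence $|\phi_i(S^{Z_i})| > |E_i|$'' does not go through in general; the size argument only works when $r$ is at least $\lfloor (|E_i|-1)/(|S|-1)\rfloor+1$, which is a strictly stronger hypothesis than the one given. Everything else in your write-up (the decomposition, the factorization, the use of $\b(p,S)$, the final summation) is correct and matches the paper.
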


\begin{proof}

For every $y \in \F_p$ the probability \[\P_{x \in S^n}((\phi_0+\phi_1)(x) \in E_1, \dots, (\phi_0+\phi_k)(x) \in E_k, \phi_0(x) = y)\] can be rewritten as \[ \P_{x \in S^n}(\phi_1(x) \in E_1 - \{y\}, \dots, \phi_k(x) \in E_k - \{y\}, \phi_0(x) = y), \] which in turn is at most \[ \P_{x \in S^n}(\phi_1(x) \in E_1 - \{y\}, \dots, \phi_k(x) \in E_k - \{y\}).\] Because the supports of $\phi_1, \dots, \phi_k$ are pairwise disjoint, the last expression is equal to the product \[\P_{x \in S^n}(\phi_1(x) \in E_1 - \{y\}) \dots \P_{x \in S^n}(\phi_k(x) \in E_k - \{y\}).\] The sets $E_1 - \{y\}, \dots, E_k - \{y\}$ have the same sizes as $E_1, \dots, E_k$ respectively, and every term of the product is strictly less than 1 by definition of $L(S,E_1), \dots, L(S,E_k)$, so is at most $1 - \b(p,S)$ by Proposition \ref{Lower bound on non-zero probabilities}. We obtain \[\P_{x \in S^n}((\phi_0+\phi_1)(x) \in E_1, \dots, (\phi_0+\phi_k)(x) \in E_k, \phi_0(x) = y) \le (1 - \b(p,S))^k\] for every $y \in \F_p$ and conclude by the law of total probability. \end{proof}

\subsection{From sunflowers to small balls}

We next tackle the case where the assumption that the mod-$p$ forms constitute a sunflower is relaxed to the assumption that they are contained in a ball of bounded radius. This assumption together with the other assumption of Proposition \ref{sunflower case} say that the supports of $\phi_1, \dots, \phi_k$ are bounded both below and above. The main proof step is that such a family that has many forms necessarily contains a sunflower with many forms. The argument is quite standard: it is a variant of that establishing the Erd\H os-Rado sunflower theorem (proved in \cite{Erdos and Rado}).

\begin{proposition}\label{sunflowers from balls}

Let $p$ be a prime, let $t \ge 1, k \ge 1, r \ge 0$ be integers. Assume that $\phi_0+\phi_1, \dots, \phi_0+\phi_k: \F_p^n \to \F_p$ are linear forms and that $\phi_1, \dots, \phi_k$ have supports at most $r$. If $k \ge p^r r! t^r$ then we can find $I \subset [k]$ with $|I| \ge t$ such that the linear forms $\phi_0+\phi_i$ with $i \in I$ constitute a sunflower with center $\phi_0$.

\end{proposition}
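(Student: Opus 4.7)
The plan is a variant of the standard inductive proof of the Erd\H os-Rado sunflower lemma, adapted from sets to systems of linear forms over $\F_p$. I would induct on the support-size bound $r$.

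At the base case $r = 0$, every $\phi_i$ is identically zero, so any $t$ of them trivially form a sunflower with disjoint (empty) petals. For the inductive step with $r \ge 1$, I would take a maximal sub-collection $J \subseteq [k]$ for which the supports $Z(\phi_i)$, $i \in J$, are pairwise disjoint. If $|J| \ge t$, we are done with $I = J$. Otherwise $|J| < t$, the union $U := \bigcup_{i \in J} Z(\phi_i)$ has $|U| < rt$, and by maximality every $\phi_i$ with $i \notin J$ has support meeting $U$. A joint pigeonhole over the at most $(p-1)|U|$ pairs $(c,a) \in U \times \F_p^*$ produces some pair $(c,a)$ and a sub-family of at least $(k - |J|)/((p-1)|U|)$ indices $i$ for which $\phi_i$ has coefficient $a$ at $c$. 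For each such $\phi_i$, the modified form $\phi_i' := \phi_i - a x_c$ has support of size at most $r - 1$. The inductive hypothesis applied to $\{\phi_i'\}$ yields a sub-collection $I$ of size $t$ and a common linear form $\psi$ such that the petals $\phi_i' - \psi$, $i \in I$, have pairwise disjoint supports. The rewriting
\[ \phi_0 + \phi_i \;=\; (\phi_0 + a x_c + \psi) \;+\; (\phi_i' - \psi) \]
then exhibits $\{\phi_0 + \phi_i : i \in I\}$ as a sunflower, with the common piece $a x_c + \psi$ absorbed into the center (so that the role of the original $\phi_0$ is played by $\phi_0 + a x_c + \psi$).

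The main care point is the quantitative bookkeeping: each level of the induction loses a factor of at most $(p-1) r t$ to the joint pigeonhole, and unfolding the resulting recurrence gives a bound of the shape $p^r r! t^r$, matching the hypothesis. The pivotal design choice is to pigeonhole jointly on (coordinate, coefficient) rather than on coordinate alone: this costs an extra factor of roughly $p$ per level of the induction — accounting for the $p^r$ factor in the final bound — and is what allows a common rank-one correction $a x_c$ to be shared across the chosen sub-family and subsequently absorbed into the center as the induction descends. Without the coefficient pigeonhole, the different values of $a$ across the selected forms would prevent the single reduction $\phi_i \mapsto \phi_i - a x_c$ from simultaneously lowering all the supports, which is the point that most clearly distinguishes the form-theoretic version of the sunflower lemma from its purely set-theoretic ancestor.
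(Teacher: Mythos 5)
Your proof follows essentially the same route as the paper's: induction on $r$, take a maximal subfamily with pairwise disjoint supports, and if it is small, pigeonhole jointly on a coordinate $c$ and a nonzero coefficient $a$ to reduce to $r-1$; the quantitative bookkeeping also matches. The one place you are actually more explicit than the paper is the final re-centering step, where you note that the extracted sunflower has center $\phi_0 + a x_c + \psi$ rather than the original $\phi_0$ — the paper's proof simply says ``conclude by the inductive hypothesis,'' silently relabelling the center, which is the reading of the proposition's conclusion that is both produced by the induction and used in Corollary \ref{corollary after balls case}.
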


\begin{proof}
We proceed by induction on $r$. If $r=0$ then the result is immediate. Suppose that for some $r \ge 1$ the result holds for $r-1$. Let $M \subset [k]$ be a maximal set such that the linear forms $\phi_i$ with $i \in M$ have pairwise disjoint supports. If $|M| \ge t$ then we take $I=M$ and we are done. Otherwise, for every $i \in [k]$ the support of $\phi_i$ intersects the union \[\cup_{j \in M} Z(\phi_j)\] of supports, which has size at most $rt$, in at least one element. By the pigeonhole principle we can find $J \subset [k]$ with size at least $k / (p-1)rt$ such that all forms $\phi_i$ with $i \in J$ have a common element $z$ in their support, and furthermore have the same coefficient $u \in \F_p$ at that element of the support. The forms $\phi_i - u x_z$ with $i \in J$ then all have supports with size at most $r-1$. We then conclude by the inductive hypothesis. \end{proof}

Our result in the case of forms contained in balls of bounded radius then follows from applying Proposition \ref{sunflowers from balls} and Proposition \ref{sunflower case} successively.

\begin{corollary} \label{corollary after balls case}

Let $p$ be a prime, let $k \ge 1$ and $r \ge 0$ be integers. If $\phi_0, \phi_1, \dots, \phi_k: \F_p^n \to \F_p$ are linear forms, and $E_1, \dots, E_k$ are strict subsets of $\F_p$ satisfying \[\max_{1 \le i \le k} L(S,E_i) \le |Z(\phi_i)| \le r\] for every $i \in [k]$ then \[\P_{x \in S^n}((\phi_0+\phi_1)(x) \in E_1, \dots, (\phi_0+\phi_k)(x) \in E_k) \le p(1 - \b(p,S))^{(k/p^r r!)^{1/r}}.\] \end{corollary}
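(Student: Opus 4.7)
The strategy is to combine Propositions \ref{sunflowers from balls} and \ref{sunflower case} in sequence, with essentially no additional work. First I would choose an integer $t \ge 1$ so that the hypothesis $k \ge p^r r! t^r$ of Proposition \ref{sunflowers from balls} is satisfied as tightly as possible, namely $t = \lfloor (k/p^r r!)^{1/r} \rfloor$. Since the forms $\phi_1, \dots, \phi_k$ have support size at most $r$ by assumption, that proposition yields a subset $I \subset [k]$ of size at least $t$ such that $(\phi_0 + \phi_i)_{i \in I}$ forms a sunflower with centre $\phi_0$.

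Next, observe that the event
\[
(\phi_0+\phi_1)(x) \in E_1, \dots, (\phi_0+\phi_k)(x) \in E_k
\]
is contained in the event obtained by keeping only the conditions indexed by $I$, so its probability is at most the probability of the latter event. The lower bound $|Z(\phi_i)| \ge \max_{1 \le j \le k} L(S,E_j)$ is inherited by every $i \in I$, and the right-hand side dominates $\max_{i \in I} L(S,E_i)$, so the hypotheses of Proposition \ref{sunflower case} are satisfied by the subfamily $(\phi_0 + \phi_i)_{i \in I}$ with the corresponding sets $(E_i)_{i \in I}$. Applying that proposition gives the bound $p(1 - \b(p,S))^{|I|} \le p(1 - \b(p,S))^t$ on the relevant probability, and substituting the chosen value of $t$ yields the desired inequality (the floor being absorbed, as one typically does, since $1 - \b(p,S) < 1$).

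There is no real obstacle in this argument: the corollary is essentially a formal consequence of the two preceding results. The only minor point to verify is that the support-size condition on $\phi_i$ transfers cleanly when restricting to the subfamily indexed by $I$, which is immediate since the maximum over $I$ of $L(S,E_i)$ only decreases when $I$ shrinks whereas the lower bound on $|Z(\phi_i)|$ is preserved.
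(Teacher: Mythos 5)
Your approach is exactly the one the paper intends: the paper's proof of this corollary consists of the single sentence that it ``follows from applying Proposition \ref{sunflowers from balls} and Proposition \ref{sunflower case} successively,'' and you fill in the details in the natural way. There is, however, a genuine error in your final step. You set $t = \lfloor (k/p^r r!)^{1/r} \rfloor$, so $t \le (k/p^r r!)^{1/r}$, and since $0 < 1 - \b(p,S) < 1$, a \emph{smaller} exponent makes the quantity \emph{larger}: $(1-\b(p,S))^t \ge (1-\b(p,S))^{(k/p^r r!)^{1/r}}$. So the bound you obtain, $p(1-\b(p,S))^t$, is \emph{weaker} than the one stated in the corollary, not stronger; the floor is not absorbed, and your parenthetical justification is backwards. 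Moreover, when $k < p^r r!$ your choice gives $t = 0$, and Proposition \ref{sunflowers from balls} as stated requires $t \ge 1$, so the argument does not even get started in that range.

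The cleanest repair is to observe that the inductive proof of Proposition \ref{sunflowers from balls} goes through verbatim for any real $t \ge 1$: the only place integrality could matter is the dichotomy ``$|M| \ge t$ or $|M| < t$,'' and since $|M|$ is an integer, $|M| < t$ still forces $|M| \le \lfloor t \rfloor \le t$, so the union of supports has size at most $rt$ and the pigeonhole count is unchanged. One then applies the proposition directly with the real parameter $t = (k/p^r r!)^{1/r}$ when this is at least $1$, obtaining a sunflower of size at least $t$ and hence the exact exponent claimed. In the remaining range $(k/p^r r!)^{1/r} < 1$, the asserted bound already exceeds $p(1-\b(p,S)) \ge 1$ (because $\b(p,S) \le 1/2 \le 1 - 1/p$), so the corollary holds vacuously. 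You should also note explicitly, as you half-do, that restricting attention to the subfamily indexed by $I$ only increases the probability, and that $\max_{i \in I} L(S,E_i) \le \max_{1 \le i \le k} L(S,E_i) \le |Z(\phi_i)|$ for every $i \in I$, so Proposition \ref{sunflower case} applies to the subfamily; that part of your argument is fine.
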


\subsection{Finishing the proof}

We are now ready to finish the proof of Theorem \ref{Main result}. Given an arbitrary family of mod-$p$ forms, either this family contains a large family of forms that jointly is approximately equidistributed, in which case we are done, or it does not, and we then reduce to the case of forms contained in a ball with bounded radius.

\begin{proof}[Proof of Theorem \ref{Main result}]

Let $\e>0$. We distinguish two cases. If we can find a set $I \subset [k]$ with size $u = \lceil p \log (2 \e^{-1}) \rceil$ satisfying \[|Z(\sum_{i \in I} a_i \phi_i)| \ge 5 p^4 \log (2\e^{-1})\] for every $a \in \F_p^I \setminus \{0\}$ then by Proposition \ref{equidistribution for k-tuples of mod-p forms} we have \[\P_{x \in S^n}(\forall i \in I, \phi_i(x) = y_i) \le 2p^{-u} \] for every $y \in \F_p^I$, and hence \[\P_{x \in S^n}(\forall i \in I, \phi_i(x) \in E_i) \le 2 (1-p^{-1})^u \le \e.\]

If on the other hand we cannot find a set $I$ as above, then we can partition $[k]$ into at most $p^{u}$ sets such that all forms $\phi_0 + \phi_i$ with index $i$ in a given set are contained in some ball of radius at most $r = 5 p^4 \log (2\e^{-1})$. By the pigeonhole principle we can find such a set containing at least $k/p^{u}$ forms. The assumption \eqref{assumption of main result} together with the triangle inequality show that all but at most one of these forms is at a distance at least $\max_{1 \le i \le k} L(S,E_i)$ from the center of the ball. Using that \[1-\b(p,S) \le \exp(-\b(p,S)),\] Corollary \ref{corollary after balls case} then provides the upper bound \[\P_{x \in S^n}(\forall i \in I, \phi_i(x) \in E_i) \le p\exp \left(-\b(p,S) ((k-p^u) / p^u p^r r!)^{1/r} \right). \] For the previous right-hand side to be at most $\e$, it suffices that \[k \le 2 p^u p^r r! (\b(p,S)^{-1} \log (p\e^{-1}))^r.\] Plugging in the values of $u$ and $r$ in terms of $\e$, that becomes \begin{equation} k \le 2 p^{p\log(2\e^{-1})+1} p^{5 p^4 \log (2\e^{-1})} (5 p^4 \log (2\e^{-1}))! (\b(p,S)^{-1} \log (p\e^{-1}))^{5 p^4 \log (2\e^{-1})}. \label{four terms} \end{equation} As $\e$ tends to $0$, the right-hand side (the dominant terms of which are the last two) grows as $\Omega(\e^{-c(p) \log \log \e^{-1}})$ for some $c(p)>0$. \end{proof}

\section{Open questions}\label{Section: Open questions}

Let us finish by discussing a few remaining questions left open by the present results and proofs.

\subsection{Quantitative bounds}

Example \ref{Example 4} shows in particular that the bounds obtained in the conclusion of Theorem \ref{Main result} (and Corollary \ref{Corollary of main result}) cannot be improved to anything better than inverse power bounds in $k$. The bounds from Theorem \ref{Main result} and Corollary \ref{Corollary of main result} are therefore not too far from the optimal bounds, which leads immediately to our first question.

\begin{question}

What are the optimal bounds in Theorem \ref{Main result} and Corollary \ref{Corollary of main result} ? In particular, can they be taken to be a negative power of $k$ ?

\end{question}

To obtain such bounds, it would suffice to make the last two terms of \eqref{four terms} each have a power dependence in $\e^{-1}$, as the first two already do. The third term would be replaced by a power of $\e^{-1}$ assuming the following slight variant of the notorious sunflower conjecture, which would then be used instead of Proposition \ref{sunflowers from balls} and which could be of independent interest.  (The sunflower conjecture itself was first formulated by Erd\H os and Rado in \cite{Erdos and Rado}, with recent celebrated progress by Alweiss, Lovett, Wu and Zhang in \cite{Alweiss Lovett Wu Zhang}.)

\begin{conjecture}

Let $p$ be a (not necessarily prime) integer, and let $r \ge 1$. Then there exist $c_1(p), c_2(p) >0$ such that if $k,t \ge 1$ are integers satisfying $k \ge c_1(p) t^{c_2(p)}$ and $v_1, \dots, v_k$ are elements of $\Z_p^n$ each with at most $r$ non-zero coordinates, then we can find a subset $I \subset [k]$ with size $t$ and pairwise disjoint subsets $X_i \subset [n]$ with $i \in \{0\} \cup I$ such that for every $i \in I$ the set of non-zero coordinates of $v_i$ is $X_0 \cup X_i$, and the restrictions of $v_1, \dots, v_k$ to their coordinates in $X_0$ coincide.

\end{conjecture}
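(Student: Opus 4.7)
The plan is to reduce the conjecture to a strong quantitative sunflower theorem for set systems, followed by a pigeonhole step that recovers the agreement of the coefficients on the common core.

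First, I would partition the vectors $v_1, \dots, v_k$ according to their support sizes, of which there are at most $r+1$ classes. The largest class retains at least a $1/(r+1)$ fraction of the vectors, and inside it all supports have the same size $s \le r$. On this sub-family I would apply a polynomial sunflower theorem to the supports $Z(v_i)$, producing a further sub-family indexed by some $J \subset [k]$ with $|J| = T$ such that the $Z(v_j)$ form a genuine set-theoretic sunflower with some core $Y_0 \subset [n]$ and pairwise disjoint petals $Y_j = Z(v_j) \setminus Y_0$. Under the polynomial form of the sunflower conjecture with exponent independent of $r$, the size $T$ can be taken as large as $(k / c_1(p))^{1/c_2(p)}$ up to harmless prefactors.

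Second, each restriction $v_j|_{Y_0}$ is an element of $(\Z_p \setminus \{0\})^{|Y_0|}$, with at most $(p-1)^{|Y_0|} \le p^r$ possible values. A pigeonhole argument therefore produces $I \subset J$ with $|I| \ge T/p^r$ on which all the restrictions $v_i|_{Y_0}$ coincide. Setting $X_0 = Y_0$ and $X_i = Y_i$ for $i \in I$ then yields the required structure, provided $T \ge t p^r$, i.e., as soon as $k \ge c_1(p) (t p^r)^{c_2(p)}$, with the $r$-dependent factor absorbed into $c_1$.

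The main obstacle is that this two-step approach is only as strong as the sunflower theorem it invokes, while the conjecture demands $c_2(p)$ to be genuinely independent of $r$. The polynomial sunflower conjecture of Erd\H os and Rado, and the current best bound of Alweiss, Lovett, Wu and Zhang \cite{Alweiss Lovett Wu Zhang}, both give exponents that grow with $r$, so the reduction above can at best inherit an $r$-dependent exponent on $t$. Proving the conjecture as stated therefore seems to require either a substantially stronger sunflower-type statement with a constant-in-$r$ exponent (itself an open problem of independent interest), or a genuinely new argument that exploits the $\Z_p$-linear structure of the $v_i$'s to fuse the sunflower extraction with the coefficient pigeonhole into a single step that avoids compounding their respective losses.
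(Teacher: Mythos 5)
This statement is a \emph{Conjecture} in the paper, not a theorem: the author explicitly offers it as an open variant of the sunflower conjecture, to be \emph{assumed} in place of Proposition~\ref{sunflowers from balls} if one wanted to improve one factor in the final bound. There is therefore no proof in the paper to compare against, and your write-up is correctly framed as an analysis of what the obvious reduction yields and where it stops, rather than as a proof. The reduction itself is sound: fix a support size at a multiplicative cost of $r+1$, extract a set-theoretic sunflower among the supports, then pigeonhole on the at most $(p-1)^{|X_0|}\le p^r$ possible restrictions to the core. Those losses are absorbable into $c_1$, and the bottleneck is exactly where you say it is.

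Your closing caution can, however, be sharpened into something definitive. The option you describe as ``a substantially stronger sunflower-type statement with a constant-in-$r$ exponent (itself an open problem of independent interest)'' is not open but false, and consequently the conjecture as literally written --- with both $c_1(p)$ and $c_2(p)$ independent of $r$ --- cannot hold. Restricting to $\{0,1\}$-valued $v_i$ makes the agreement-on-the-core clause vacuous and reduces the statement to the ordinary set sunflower problem, where the classical lower bound construction (take $r$ disjoint blocks of size $t-1$ and let the family consist of all $(t-1)^r$ transversals: each has size $r$, and no $t$ of them form a sunflower) shows that the threshold for forcing a $t$-sunflower among sets of size at most $r$ is at least $(t-1)^r$. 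No bound of the form $c_1(p)\,t^{c_2(p)}$ can dominate $(t-1)^r$ uniformly in $r$; already $t=3$ and $r\to\infty$ fails. The statement the author presumably has in mind is the Erd\H os--Rado polynomial sunflower conjecture with the pigeonhole factor folded in, something of the shape $k\ge C(p)^r t^{r}$; your two-step reduction does establish that conditionally on the set-theoretic version, and it would still replace the $r!$ factor (the third term of \eqref{four terms}) by a power of $\e^{-1}$ as the paper wants, leaving the $\log\log$ loss from the fourth term untouched, exactly as the paper's own discussion anticipates.
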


The last term of \eqref{four terms} presents, however, another difficulty on its own: fundamentally the extra $\log \log \e^{-1}$ term in the exponent is due to the fact that the parameter $u$ in that proof grows with $\e^{-1}$, and this dependence in turn appears to be quite basic and difficult to get around. This suggests that a proof which manages to obtain a power bound in Theorem \ref{Main result} would involve substantial extra ideas compared to the current proof.

\subsection{Remaining qualitative questions}

In another direction we can try to understand better which sets of conditions \[\{(\phi_i, E_i): i \in I\}\] lead to sets \[\{x \in S^n: \forall i \in I, \phi_i(x) \in E_i\}\] that are dense inside $S^n$ and which do not. This question is perhaps best formulated in an infinitary manner. We consider a family of linear forms from $\F_p^{\N}$ to $\F_p$ with finite support, that is, a family of linear forms of the type \[x \mapsto a_1 x_1 + \dots + a_n x_n\] for some finite $n$ (that depends on the form) and some coefficients $a_1, \dots, a_n \in \F_p$. If $k \ge 1$ is an integer, $\phi_1, \dots, \phi_k$ are such linear forms, and $E_1, \dots, E_k$ are subsets of $\F_p$, then we say that the \emph{density of the satisfying set} of the conditions $\phi_1(x) \in E_1, \dots, \phi_k(x) \in E_k$ is the density of the set \[\{x \in S^N: \phi_1^N(x) \in E_1, \dots, \phi_k^N(x) \in E_k\}\] inside $S^N$, where $N$ is a value of $n$ that works for all linear forms $\phi_1, \dots, \phi_k$, and the forms $\phi_1^N, \dots, \phi_k^N: \F_p^N \to \F_p$ are defined by \[\phi_i^N(x_1, \dots, x_N) = \phi_i(x)\] for every $i \in [k]$ and every $x \in \F_p^{\N}$. We say that a family of conditions $\{(\phi_i, E_i): i \in I\}$ with $\phi_i: \F_p^{\N} \to \F_p$ and $E_i \subset \F_p$ for every $i \in I$ is \emph{density-reducing} if there exists a function $F: \N \to \N$ tending to $0$ at infinity such that whenever $I’ \subset I$ is a finite set, then the density of the satisfying set of the conditions $\{(\phi_i, E_i): i \in I’\}$ is at most $F(|I’|)$. 

For instance, the family of conditions \[\{x_{i} = 0: i \ge 1 \}\] is density-reducing for $S = \{0,1\}$, since we may take $F(k) = 2^{-k}$, whereas the family of conditions \[\{x_1 + x_{i} \in \{0,1\}: i \ge 2\}\] is not density-reducing for $S = \{0,1\}$, as discussed in Example \ref{Example 1}.

With this definition, the qualitative side of the present paper’s contribution can be reformulated in terms of properties which ensure that a family of conditions $\{(\phi_i, E_i): i \in I\}$ with $E_i \neq \F_p$ is density-reducing. While Proposition \ref{equidistribution for k-tuples of mod-p forms} showed that it suffices that for every integer $r$ the set of forms $\{\phi_i: i \in I\}$ cannot be contained in finitely many balls with radius $r$, we now know by our Theorem \ref{Main result} that it suffices to guarantee that for the single value $r=2p-1$. A definitive characterisation remains open.

\begin{question}

Let $p$ be a prime, and let $S$ be a non-empty subset of $\F_p$. Can we characterise families of conditions $\{(\phi_i, E_i): i \in I\}$ that are density-reducing in $S^n$, where $\phi_i: \F_p^{\N} \to \F_p$ are linear forms with finite support and $E_i$ are strict subsets of $\F_p$ ?

\end{question}

In the special case $S=\F_p$, this question has a simple answer: the family is density-reducing if any only if we are in one of the following two cases: (i) the linear subspace $\langle \phi_i: i \in I \rangle$ has infinite dimension, or (ii) (the family involves finitely many pairwise distinct conditions and) and no $x \in S^n$ satisfies them, that is, informally speaking, the conditions are incompatible.

On the broader topic of understanding the behaviour of mod-$p$ forms on $\{0,1\}^n$ and on $S^n$ we may want to better understand how conditions may be deduced formally from one another. Given a non-empty subset $S$ of $\F_p$, if $\phi_1$, $\phi_2: \F_p^n \rightarrow \F_p$ are two linear forms, and $E_1$, $E_2$ are two subsets of $\F_p$, then $\{(\phi_1 + \phi_2, E_1 + E_2)\}$ is implied by $\{(\phi_1, E_1),(\phi_2, E_2)\}$ in the sense of the inclusion \[ \{x \in S^n: \phi_1(x) \in E_1, \phi_2(x) \in E_2\} \subset \{x \in S^n: (\phi_1 + \phi_2)(x) \in E_1 + E_2\} \]
but it is also the case that we can deduce $(\phi_1 + \phi_2)(x) \in E_1 + E_2$ formally from $\phi_1(x) \in E_1$ and $\phi_2(x) \in E_2$.

\begin{question} Given a non-empty subset $S$ of $\F_p$ and a family $\{(\phi_i, E_i): i \in I \}$ of $\bmod$-$p$ conditions, is there a class of formal operations which is simple to describe and such that whenever $(\phi, E)$ is a $\bmod$-$p$ condition satisfying \[\{x \in S^n: \forall i \in I, \phi_i(x) \in E_i\} \subset \{x \in S^n: \phi(x) \in E\},\] then $(\phi,E)$ can be formally deduced from the conditions $(\phi_i, E_i)$ with $i \in I$ using these operations? \end{question}

\end{document}